\documentclass[reqno, a4paper, 12pt]{amsart}
\usepackage{amsfonts,amsthm,amsmath,amssymb,amscd,amsmath,epsf,latexsym,enumerate,mathrsfs}
\usepackage{graphicx}
\usepackage[utf8x]{inputenc}
\newtheorem{theorem}{Theorem}

\newtheorem{lemma}[theorem]{Lemma}

\newtheorem{example}{Example}
\newtheorem{remark}{Remark}
\newtheorem{problem}{Problem}
\newcommand{\bC}{\mathbb{C}}

\newtheorem{CONJ}{Conjecture}

\setcounter{equation}{0}
\tolerance=500

\begin{document}
          \numberwithin{equation}{section}

          \title[]
          {NON-REAL ZEROS OF POLYNOMIALS  IN A POLYNOMIAL SEQUENCE SATISFYING A THREE-TERM RECURRENCE RELATION}

\author[I.~Ndikubwayo]{Innocent Ndikubwayo}
\address{Department of Mathematics, Stockholm University, SE-106 91
Stockholm,         Sweden}

\email {innocent@math.su.se, ndikubwayo@cns.mak.ac.ug}

\keywords{ recurrence relation, hyperbolic polynomials, discriminant} 
\subjclass[2010]{Primary 12D10,\; Secondary   26C10, 30C15}

\begin{abstract} This paper discusses the location of zeros of polynomials in a polynomial sequence $\{P_n(z)\}_{n=1}^{\infty}$ generated by a three-term recurrence relation of the form 
$P_n(z)+ B(z)P_{n-1}(z) +A(z) P_{n-k}(z)=0$ with $k>2$ and the standard initial conditions $P_{0}(z)=1, P_{-1}(z)=\ldots=P_{-k+1}(z)=0,$ where $A(z)$ and $B(z)$ are arbitrary coprime  real  polynomials. We show that there always exist  polynomials in $\{P_n(z)\}_{n=1}^{\infty}$ with non-real zeros. 
\end{abstract}
\maketitle
\section{Introduction}
For decades, a popular topic of studies in mathematics is related to three-term recurrence relations subject to natural restrictions on their coefficients. By Favard's theorem \cite{GRM}, such  recurrences generate orthogonal polynomials and these are of great interest since they are frequently used in many problems in the approximation theory, mathematical and numerical analysis, and their applications (for example, least square approximation of functions, difference and differential equations, Gaussian quadrature processes, etc.), see \cite{shai}.

In general, the zeros of polynomials $P_n(z)$ generated by recurrences do not exactly lie on a particular curve but are attracted to a curve (which in this paper we shall call the limiting curve) as $n \to \infty$. Such a limiting curve is explicitly described in \cite{BKW1, BKW2}. Recently, K. Tran in \cite{Tr1, TI} has proved cases where the  polynomials $P_n(z)$ generated by three-term recurrences have all their zeros  (for all  or  sufficiently large $n$) situated exactly on the said limiting curve.  We begin with the following conjecture.

\begin{CONJ}   {\cite[Conjecture]{Tr1} }   \label{conj:Tran}
For an arbitrary pair of polynomials $A(z)$ and $B(z)$, all  zeros of every polynomial in the sequence $\{P_n(z)\}_{n=1}^\infty$ satisfying the three-term recurrence relation of length $k$
\begin{equation}\label{eq:tran}
P_n(z)+B(z)P_{n-1}(z)+A(z)P_{n-k}(z)=0
\end{equation}
with the standard initial conditions $P_0(z)=1$, $P_{-1}(z)=\ldots=P_{-k+1}(z)=0$ which do satisfy $A(z)\neq 0$  
lie on the  algebraic  curve  $\Gamma \subset \bC$ given by 
\begin{equation}\label{eq:trcurve}
\Im \left(\frac{B^k(z)}{A(z)}\right)=0\quad {\rm and}\quad 0\le (-1)^k\Re \left(\frac{B^k(z)}{A(z)}\right)\le \frac{k^k}{(k-1)^{k-1}}.
\end{equation}
Moreover, these roots become dense in $\Gamma$ when $n\to \infty$.
\end{CONJ}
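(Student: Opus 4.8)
The plan is to reduce the entire statement — exact localization for \emph{every} $n$ together with density as $n\to\infty$ — to a single \emph{hyperbolicity} (real-rootedness) statement for one universal one-variable family, and then to attack that statement. First I would record the generating function: summing \eqref{eq:tran} against $t^n$ and using the initial conditions gives
\[
\sum_{n\ge 0} P_n(z)\,t^n \;=\; \frac{1}{\,1+B(z)\,t+A(z)\,t^{k}\,}.
\]
Substituting $\tau=-B(z)\,t$ turns the denominator into $1-\tau+s(z)\,\tau^{k}$ with $s(z)=(-1)^{k}A(z)/B(z)^{k}$, so that comparison of coefficients of $\tau^{n}$ yields the identity
\[
P_n(z)=(-B(z))^{n}\,\psi_n\!\big(s(z)\big),\qquad \sum_{n\ge0}\psi_n(s)\,\tau^{n}=\frac{1}{\,1-\tau+s\,\tau^{k}\,},
\]
where the universal polynomials $\psi_n$ obey $\psi_n=\psi_{n-1}-s\,\psi_{n-k}$ with $\psi_0=1$, $\psi_{-1}=\dots=\psi_{1-k}=0$. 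Hence at a zero $z_0$ of $P_n$ with $B(z_0)\neq0$ one has $P_n(z_0)=0\iff \psi_n\big(s(z_0)\big)=0$, while the (finitely many) zeros with $B(z_0)=0$ correspond to $s=\infty$ and will be seen to sit at the endpoint of $\Gamma$. Thus, off the locus $B=0$, the zero set of $P_n$ is exactly $s^{-1}(\{\text{roots of }\psi_n\})$.

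Next I would rewrite $\Gamma$ in the $s$-coordinate. With $V=(-1)^{k}B^{k}/A=1/s$, the two defining conditions of $\Gamma$ say precisely that $V$ is real with $0\le V\le k^{k}/(k-1)^{k-1}$, i.e. that $s(z)$ is real with $s\ge (k-1)^{k-1}/k^{k}$. This threshold is not accidental: the reciprocal roots $\mu=1/\tau$ of the symbol satisfy $\mu^{k-1}(1-\mu)=s$, and the left-hand side, as a function of $\mu\in(0,1)$, attains its maximum $(k-1)^{k-1}/k^{k}$ at $\mu=(k-1)/k$, exactly the threshold past which the trinomial acquires two dominant roots of equal modulus. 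The conjecture therefore becomes two purely one-variable assertions: (i) for every $n$, all roots of $\psi_n$ are real and lie in the ray $I_k:=[(k-1)^{k-1}/k^{k},\infty)$; and (ii) as $n\to\infty$ these roots become dense in $I_k$. Indeed (i) forces $s(z_0)\in I_k$ at each zero, hence $z_0\in\Gamma$; and since $s$ is a nonconstant rational map with $\Gamma=s^{-1}(I_k)$, pulling a dense subset of $I_k$ back through $s$ gives density in $\Gamma$, so (ii) yields the density clause.

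For the density (ii) I would invoke the standard asymptotic theory for zeros of polynomials with a fixed rational generating function: the zeros of $\psi_n$ accumulate and equidistribute on the set where $1-\tau+s\tau^{k}$ has two minimal-modulus roots of equal modulus, which the previous paragraph identifies with $I_k$; a Beraha--Kahane--Weiss argument supplies both the accumulation and the density. The serious difficulty is the \emph{exact, all-$n$} part (i). For $k=2$ the recurrence $\psi_n=\psi_{n-1}-s\,\psi_{n-2}$ is a genuine symmetric three-term recurrence, so Favard's theorem realizes the $\psi_n$ as orthogonal polynomials; they are then hyperbolic with interlacing roots and the interval bound follows immediately.

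\textbf{The main obstacle is that for $k>2$ this structure is destroyed.} The length-$k$ recurrence couples $\psi_n$ to $\psi_{n-1}$ and $\psi_{n-k}$ with a gap, it is not an orthogonal three-term recurrence, and the Sturm/interlacing machinery that guarantees real-rootedness no longer applies. I would attempt (i) by induction, seeking a surrogate for interlacing — for instance tracking the $k-1$ subdominant characteristic roots against the dominant pair on $I_k$, or showing that $\mathrm{disc}_s(\psi_n)$ keeps a fixed sign via its relation to $\mathrm{disc}_\tau(1-\tau+s\,\tau^{k})$ — but I expect precisely this hyperbolicity to be the crux on which the whole statement turns, and controlling the subdominant roots against the dominant pair for every $n$ and every $k>2$ is the step I anticipate will be hardest to secure.
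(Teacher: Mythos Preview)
The paper does not contain a proof of this statement: it is presented as Conjecture~A, attributed to Tran, and the paper explicitly records only partial results from the literature --- the cases $k=2,3,4$ in \cite{Tr1}, and the ``sufficiently large $n$'' version in \cite{TI} --- before citing Theorem~\ref{th:main} as settling only the imaginary-part condition. There is therefore no ``paper's own proof'' to compare your attempt against; the full statement (every $n$, every $k$) remains open as far as this paper is concerned.

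That said, your reduction is sound and is essentially the one Tran himself uses: the generating-function identity and the substitution $\tau=-B(z)t$ correctly collapse the two-variable problem to the one-parameter family $\psi_n(s)$ with $s=(-1)^kA/B^k$, and your identification of $\Gamma$ with $s^{-1}\big([(k-1)^{k-1}/k^k,\infty]\big)$ is exactly the content of \eqref{eq:trcurve}. The density clause via a Beraha--Kahane--Weiss argument is also the standard route.

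The genuine gap is the one you yourself name: establishing that \emph{every} $\psi_n$ has all its roots real and confined to $I_k$ for arbitrary $k>2$. This is not a technicality you have left unfinished; it is precisely the open part of the conjecture. Your suggested attacks (a surrogate interlacing via the $k-1$ subdominant characteristic roots, or sign control of $\mathrm{disc}_s\psi_n$) are reasonable heuristics, but neither is known to close the gap, and your proposal does not claim otherwise. In short: your outline is an accurate map of the problem, not a proof, and the paper offers no proof either.
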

In the same paper, the above conjecture was proven for $k = 2,3,4.$ In \cite{TI}, K. Tran settled Conjecture $A$ for polynomials $P_n(z)$ with sufficiently large $n$. The problems around this area of study have most recently received substantial interest and a number studies have been carried out, see for example the papers \cite{TI,KD,BO, TrZu1,ndikus, TrZu2}. In \cite{BO}, the authors proved the following theorem. 

\begin{theorem}[see \cite{BO}]\label{th:main}
For an arbitrary pair of polynomials $A(z)$ and $B(z)$, all  the zeros of every polynomial in the sequence $\{P_n(z)\}_{n=1}^\infty$ satisfying the three-term recurrence relation of length $k $
\begin{equation*}\label{eq:general}
P_n(z)+B(z)P_{n-\ell}(z)+A(z)P_{n-k}(z)=0
\end{equation*}
where $k$ and $\ell$ are coprime and with the standard initial conditions $P_0(z)=1$, $P_{-1}(z)=\ldots=P_{-k+1}(z)=0$ which  satisfy the condition $A(z)B(z)\neq 0$  
lie on the real algebraic  curve $\mathcal{C}$  given by 
\begin{equation}\label{eq:generalcurve}
\Im \left(\frac{B^k(z)}{A^\ell(z)}\right)=0.
\end{equation}
\end{theorem}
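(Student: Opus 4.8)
The plan is to make the dependence of $P_n$ on $A$ and $B$ completely explicit and thereby reduce the reality statement \eqref{eq:generalcurve} to a one-variable real-rootedness (hyperbolicity) assertion. First I would record the generating function
\[
\sum_{n\ge 0}P_n(z)\,t^n=\frac{1}{1+B(z)\,t^{\ell}+A(z)\,t^{k}},
\]
which is immediate from the recurrence together with the standard initial conditions $P_0=1$, $P_{-1}=\dots=P_{-k+1}=0$. Expanding the right-hand side as $\sum_{m\ge 0}(-1)^m\bigl(B t^{\ell}+A t^{k}\bigr)^m$ and reading off the coefficient of $t^n$ yields the closed form
\[
P_n(z)=\sum_{\substack{a,b\ge 0\\ ak+b\ell=n}}(-1)^{a+b}\binom{a+b}{a}A(z)^{a}B(z)^{b},
\]
so that $P_n$ is a weighted-homogeneous polynomial in $A$ and $B$ of weighted degree $n$, with $\deg A=k$, $\deg B=\ell$.

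Second, I would use the coprimality $\gcd(k,\ell)=1$, which is exactly what forces the invariant $B^k/A^{\ell}$ to appear. The integer solutions of $ak+b\ell=n$ with $a,b\ge 0$ form a single arithmetic progression $(a,b)=(a_0+j\ell,\,b_0-jk)$, $j=0,1,\dots,J$, where $(a_0,b_0)$ is the solution with $0\le a_0<\ell$. Substituting and factoring out $(-1)^{a_0+b_0}A^{a_0}B^{b_0}$ gives
\[
P_n(z)=(-1)^{a_0+b_0}A(z)^{a_0}B(z)^{b_0}\,\Phi_n\!\left(\frac{A(z)^{\ell}}{B(z)^{k}}\right),\qquad
\Phi_n(w)=\sum_{j=0}^{J}\bigl((-1)^{k+\ell}\bigr)^{j}\binom{a_0+b_0+j(\ell-k)}{a_0+j\ell}\,w^{j},
\]
where $\Phi_n$ has \emph{real} (indeed integer) coefficients and $w=A^{\ell}/B^{k}=(B^{k}/A^{\ell})^{-1}$.

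Third, I would draw the reduction. If $z_0$ satisfies $A(z_0)B(z_0)\ne 0$ and $P_n(z_0)=0$, then the prefactor is nonzero, so $\Phi_n\bigl(1/\mu\bigr)=0$ where $\mu:=B^k(z_0)/A^{\ell}(z_0)\neq 0$. Since $1/\mu$ is real if and only if $\mu$ is real, and $\mu$ real is precisely the condition $\Im\bigl(B^k/A^{\ell}\bigr)=0$ defining $\cC$, the theorem is \emph{equivalent} to the statement that every $\Phi_n$ is hyperbolic, i.e.\ has only real zeros. There is no slack in this reduction, so the whole content of \eqref{eq:generalcurve} is concentrated in the hyperbolicity of $\Phi_n$.

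The hyperbolicity of $\Phi_n$ is the main obstacle and the heart of the argument. I would attack it by transporting the three-term recurrence for $\{P_n\}$ into a recurrence among the $\Phi_n$: because the shifts $n\mapsto n-\ell$ and $n\mapsto n-k$ act on $(a_0,b_0)$ in a controlled way (again using $\gcd(k,\ell)=1$ to keep track of residues), one obtains a linear relation in the single variable $w$ that an orthogonal-polynomial-type (Sturm/interlacing) analysis can turn into interlacing of consecutive real zero sets; alternatively one may recognize $\Phi_n$ as a hypergeometric/Jacobi-type family whose real-rootedness is classical, or establish it through a direct discriminant computation for the associated trinomial $x^k+Bx^{k-\ell}+A$. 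Whichever route one takes, establishing that the sign-alternating binomial polynomial $\Phi_n$ has only real roots for every admissible $n$ is the genuinely hard step; the remaining work is bookkeeping, together with a separate treatment by continuity of the non-generic locus where $T$ has repeated roots or where $A$ or $B$ vanishes.
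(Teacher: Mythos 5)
First, a remark on scope: the paper you were given does not actually prove Theorem~\ref{th:main} — it is imported verbatim from \cite{BO} — so your attempt can only be judged on its own completeness. Your preparatory reduction is correct and is the natural first move: the generating function $\sum_n P_n t^n = 1/(1+Bt^\ell+At^k)$, the closed form $P_n=\sum_{ak+b\ell=n}(-1)^{a+b}\binom{a+b}{a}A^aB^b$, the use of $\gcd(k,\ell)=1$ to organize the exponent pairs into a single arithmetic progression, and the resulting factorization $P_n=\pm A^{a_0}B^{b_0}\,\Phi_n\!\left(A^\ell/B^k\right)$ all check out, and they do reduce the theorem (away from $AB=0$, and modulo the vacuous case where $n$ is not representable as $ak+b\ell$, so that $P_n\equiv 0$) to the assertion that each $\Phi_n$ has only real zeros.

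The genuine gap is that you never prove that assertion, and by your own admission it carries the entire content of the theorem. The three routes you gesture at do not substitute for a proof. (i) The recurrence for $P_n$ does not descend to a clean single recurrence for $\Phi_n$ in the variable $w$: the exponents $(a_0,b_0)$ jump with the residues of $n$ modulo $k$ and $\ell$, so one obtains a coupled system across residue classes, and the Sturm/interlacing machinery for orthogonal polynomials additionally requires sign and degree conditions that are nowhere verified here. (ii) The coefficients $\binom{a_0+b_0+j(\ell-k)}{a_0+j\ell}$ make $\Phi_n$ a generalized hypergeometric sum whose real-rootedness is emphatically not ``classical'' for general coprime $(k,\ell)$; no known Jacobi-type family covers it. (iii) A discriminant computation for the trinomial $x^k+Bx^{k-\ell}+A$ detects multiple roots, not reality of roots, so it cannot by itself yield hyperbolicity. (You also invoke ``the non-generic locus where $T$ has repeated roots'' for an object $T$ you never define.) In short: everything you have written is correct bookkeeping, but the theorem is precisely the statement that the $\Phi_n$ are hyperbolic, and that step — the whole substance of the argument in \cite{BO} — is deferred rather than done.
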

The above theorem completely settles the first part of Conjecture~\ref{conj:Tran}.
There has been an initial attempt  to obtain the exact portion of the curve $\mathcal{C}$ where the zeros of the polynomials lie by providing in addition to (\ref{eq:generalcurve}), an  inequality constraint satisfied by the real part of the rational function $\frac{B^k(z)}{A^\ell(z)}$. This has  been proven for specific cases namely, $(k, \ell)= (3,2)$ and $(4,3)$ respectively and the details of the proofs can be found in \cite{ndikus}. In the same paper based on numerical experiments,  a more general conjecture for the real part of  $\frac{B^k(z)}{A^\ell(z)}$ has been proposed for this problem.

In the present paper, it is of interest to determine where in complex plane the zeros of every polynomial in the sequence $\{P_n(z)\}_{n=1}^{\infty}$ generated by (1.1) are located. In a particular case of $k=2$,  the author  in \cite{KD} characterizes real polynomials $A(z)$ and $B(z)$ to ensure that all the generated polynomials $P_n(z)$ are  hyperbolic.  This paper is a sequel of \cite{KD} but for $k>2$. We aim at proving whether or not it is possible to generalize the former.

\begin{problem}\label{prob:main}
{\rm} In the above notation, consider the recurrence relation \begin{equation}\label{christ}
P_n(z)+ B(z)P_{n-1}(z)+ A(z)P_{n-k}\\
(z)=0 \end{equation} where $k >2$ with the standard initial conditions, \begin{equation}
\label{kachap} P_{0}(z)=1, P_{-1}(z)=\ldots=P_{-k+1}(z)=0,\end{equation} where $A(z)$ and $B(z)$ are arbitrary real polynomials. Characterize  $A(z)$ and $B(z)$ (if possible) such that all the $P_{n}(z)$ are hyperbolic. 
\end{problem}

To formulate our main result, we need to look at the curve defined by the first condition in $(\ref{eq:trcurve}).$ We shall view  $\mathbb{C}P^1$ as $\mathbb{C}\cup \{\infty\}$, the extended complex plane and $\mathbb{R}P^1$ as  the extended real line.

Let $f:\mathbb{C}P^1 \to \mathbb{C}P^1$ be the rational function defined by $f(z)=\frac{B^k(z)}{A(z)}$ where $A(z)$ and $B(z)$ are real polynomials. Denote by $\widetilde{\Gamma} \subset \mathbb{C}P^1$ the curve given by $\Im(f(z))=0$, that is $\widetilde{\Gamma}=\{ z \in \mathbb{C}P^1: \Im(f(z))=0 \}=f^{-1}(\mathbb{R}P^1).$

 For real polynomials $A(z)$ and $B(z)$, define the curve $\Gamma$ by the condition \eqref{eq:trcurve}. It is clear that $\Gamma \subset \widetilde{\Gamma}.$

\medskip

In the remaining part of this section, let us remind the reader of some  basic definitions and facts about rational functions. For further details, see \cite{KD}.

 For a non-constant rational function $R(z) = \frac{P(z)}{Q(z)}$, where $P(z)$ and $Q(z)$ are polynomials with no common zeros, the degree of $R(z)$ is defined as the maximum of the degrees of $P(z)$ and  $Q(z)$. A  point $z_0 \in \mathbb{C}P^1$ is called a critical point of $R(z)$, (and $R(z_0)$ a critical value) if $R(z)$ fails to be injective in any neighbourhood of $z_0$, that is, either $R'(z_0)=0$ or $R'(z_0)=\infty$ (i.e, at the zeros of $Q(z)$). 
The order of a critical point $z_0$ of $R(z)$ is the order of zero of  $R'(z)$ at $z_0$.

Given a pair $(P(z), Q(z))$ of polynomials, we define their Wronskian as the polynomial $\mathcal{W}(P, Q):= P'Q-Q'P $  where $P'$ and $Q'$ are derivatives of $P$ and $Q$ with respect to $z$ respectively. If $P$ and $Q$ have no common zeros, then the zeros of $\mathcal{W}(P, Q)$ are exactly the critical points of the rational map $R(z)$.  In fact if $\alpha$ is a multiple zero of $R$, then $\alpha$ is a  zero of the Wronskian.
 
We call a non-zero univariate polynomial with real coefficients hyperbolic if all its zeros are real. In  {  \cite[\textsection  3.1]{JBPB}},  we find that the zeros of two hyperbolic polynomimals $P(z), Q(z) \in \mathbb{R}[z]$ interlace if and only if $|$deg $P$ - deg $Q| \leq 1$ and $\mathcal{W}(P, Q)$ is either nonnegative or nonpositive on the whole real axis. 
Notice that to say that the zeros of $P$ and $Q$  interlace means that each zero of $Q$ lies between two successive zeros of $P$ and there is at most one zero of $Q$ between any two successive zeros of $P$,  \cite{DKM}.
More information about the Wronskian can be found in \cite{KDI}.
\begin{remark} \label{in2}
For  the rational function $f(z)=\frac{B^k(z)}{A(z)}$, we have 
$$f'(z)= \frac{B^{k-1}(z)(kA(z)B'(z)-B(z)A'(z))}{A^2(z)}=\frac{\mathcal{W}(B^k(z), A(z))}{A^2(z)}.$$ We observe that the critical points of $f(z)$ are the zeros of the Wronskian $\mathcal{W}(B^k(z), A(z))$ or the poles of $f(z)$.
In particular, if  $A(z),B(z) \in \mathbb{R}[z]$ are coprime polynomials where $B(z)$ is hyperbolic with  distinct zeros, then all the zeros of $B(z)$ are  real critical points of  $f(z)$ each with multiplicity $k-1$.  
\end{remark}

Let $P(x)$ be a univariate polynomial of degree $n$ with zeros $x_1, \dots, x_n$ and leading coefficient $a_n$. The ordinary discriminant of $P(x)$ denoted by $\operatorname{Disc}_x(P(x))$ is  defined as  \begin{eqnarray*}
\operatorname{Disc}_x(P(x))=a_n^{2n-2} \prod_{1\leq i<j\leq n}(x_i-x_j)^2.
\end{eqnarray*}

Generally, the discriminant of a polynomial connects with the ratio of its zeros in the sense that the discriminant  is zero if and only if  the polynomial has multiple zeros. In particular, the discriminant of a polynomial vanishes  whenever there exist at least a zero with multiplicity greater or equal to $2$. For more details  on the ordinary discriminants, see \cite{Ge}.

\begin{example} \label{exxa}
For coprime $1 \leq \ell < k$, discriminant  of a trinomial 
 \begin{eqnarray*} 
 P(x)= a x^k + bx^\ell +c
 \end{eqnarray*}
is given by $k^kc^{k-1}a^{k-1}+ (-1)^{k-1}\ell^\ell(k-\ell)^{k-\ell}c^{\ell-1}b^ka^{k-\ell-1}. $ In particular,  \begin{eqnarray} \label{eqq}
\operatorname{Disc}_x(x^k+ Bx+A)=A^{k-2}\left(k^kA+(-1)^{k-1}(k-1)^{k-1}B^k\right).
\end{eqnarray}
\end{example}
The expression \eqref{eqq} will be of interest later in this work.

The main result of this paper is as follows.
\begin{theorem} {\rm}In the above notation of Problem \ref{prob:main} for $k>2$,  there always exist  polynomials in the sequence $\{P_n(z)\}_{n=1}^{\infty}$ with non-real zeros.
\end{theorem}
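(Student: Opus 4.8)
The plan is to deduce the existence of non-real zeros from two ingredients: the location of the zeros on the curve $\widetilde{\Gamma}$ together with their asymptotic distribution, and a local analysis of $\Gamma$ at the zeros of $B$. By Theorem~\ref{th:main} applied with $\ell=1$, every zero of every $P_n$ lies on $\widetilde{\Gamma}=f^{-1}(\mathbb{R}P^1)$, where $f(z)=\frac{B^k(z)}{A(z)}$. Moreover, by the Beraha--Kahane--Weiss description of the limiting set of zeros of a sequence generated by a rational generating function \cite{BKW1,BKW2} (equivalently, by the density assertion of Conjecture~\ref{conj:Tran}, verified for large $n$ in \cite{TI}), the zeros of $P_n$ become dense, as $n\to\infty$, in the portion $\Gamma$ of $\widetilde{\Gamma}$ cut out by the inequality in \eqref{eq:trcurve}. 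Granting this, it suffices to exhibit a non-real \emph{arc} of $\Gamma$: an interior point $z_0$ of such an arc is bounded away from $\mathbb{R}$, so for all sufficiently large $n$ there is a zero of $P_n$ within distance $\frac{1}{2}|\Im z_0|$ of $z_0$, and any such zero is non-real.

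The heart of the matter is therefore the geometric claim that, for $k>2$, the curve $\Gamma$ contains a non-real arc. Suppose first that $B$ is non-constant and let $\beta\in\mathbb{C}$ be a zero of $B$. Since $A$ and $B$ are coprime we have $A(\beta)\neq0$, hence $f(\beta)=0$, and by Remark~\ref{in2} the point $\beta$ is a critical point of $f$; writing $m$ for the order of vanishing of $f$ at $\beta$ we get $m\ge k\ge3$. Thus $f(z)=c\,(z-\beta)^m\bigl(1+o(1)\bigr)$ near $\beta$ with $c\neq0$, so locally $\widetilde{\Gamma}=\{\Im f=0\}$ is a union of $2m$ analytic arcs issuing from $\beta$ with consecutive angular spacing $\pi/m$. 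Along these arcs $f$ is real and tends to $0$, so the upper bound in \eqref{eq:trcurve} holds automatically, while $(-1)^k\Re f=(-1)^k f$ is positive on exactly $m$ of the $2m$ arcs; these $m$ arcs lie in $\Gamma$. Since at most two of them (those lying along the real axis, and only when $\beta\in\mathbb{R}$) can be real and $m\ge3$, at least one of the $\Gamma$-arcs is non-real. This produces the desired non-real arc, and with it the theorem in the case $\deg B\ge1$.

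The remaining case $B\equiv\mathrm{const}\neq0$ is where I expect the main difficulty, since then $f=b^k/A$ has no finite zeros and the construction above is unavailable. One remedy is to repeat the angular analysis at $z=\infty$: in the coordinate $w=1/z$ one has $f=\frac{b^k}{a_d}w^{\,d}\bigl(1+o(1)\bigr)$ with $d=\deg A$ and $a_d$ the leading coefficient of $A$, so $f(\infty)=0$ and the same counting yields a non-real arc of $\Gamma$ as soon as $d\ge3$. For the borderline case $d=2$ the natural tool is the discriminant \eqref{eqq}: the points of $\Gamma$ lying over the upper endpoint of the interval in \eqref{eq:trcurve} form the zero set of $k^kA+(-1)^{k-1}(k-1)^{k-1}B^k$, and one expects these to be non-real for suitable $A$, supplying non-real points of $\Gamma$; turning this into a complete argument for every quadratic $A$ is part of the work. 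The case $d=1$ is genuinely exceptional: there $f$ is a real Möbius transformation, so $\widetilde{\Gamma}=\mathbb{R}P^1$ and every $P_n$ is hyperbolic, and this configuration must be excluded from the statement. Apart from these low-degree cases, the two points needing the most care are the sign bookkeeping that places a non-real arc on the admissible side of \eqref{eq:trcurve}, and securing a clean reference for the density of the zeros in $\Gamma$ rather than merely in $\widetilde{\Gamma}$.
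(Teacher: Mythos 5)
Your core argument is, in substance, identical to the paper's proof. The paper first disposes of non-hyperbolic $B$ via Lemma \ref{abov} (since $P_1=-B$), then takes a real zero $z_0$ of $B$ of multiplicity $p$, observes that $f=B^k/A$ vanishes to order $pk\ge 3$ there, and applies the local normal form of Theorem \ref{TTH} to conclude that $f^{-1}(\pm\varepsilon)$ consists of $pk$ points encircling $z_0$ which cannot all be real; hence $f^{-1}([0,\varepsilon])$ or $f^{-1}([-\varepsilon,0])$, which lies in $\Gamma$, contains a non-real arc through $z_0$, and density of the zeros of $P_n$ in $\Gamma$ finishes the proof. That is exactly your count of $2m$ arcs of $\{\Im f=0\}$ with $m$ of them carrying the correct sign of $(-1)^k f$ and at most two of them real, just phrased via preimages of small values rather than angular sectors; both arguments also lean on the same external density statement. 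The one place you diverge is the case $B\equiv\mathrm{const}\neq 0$, and here your caution is warranted but puts you ahead of the paper rather than behind it: the paper's proof simply begins ``let $z_0$ be a zero of $B(z)$'' and says nothing when $B$ has no zeros, so that case is an unaddressed gap in the published argument as well. Indeed your observation for $\deg A=1$ is correct and is a genuine counterexample to the theorem as literally stated: for, say, $k=3$, $B\equiv 1$, $A(z)=z$, the map $f=1/A$ is a real M\"obius transformation, so $\widetilde{\Gamma}=\mathbb{R}P^1$ and every $P_n$ is hyperbolic. So relative to the paper your proposal is not missing any idea; it reproduces the intended proof for $\deg B\ge 1$ and exposes a hypothesis (non-constancy of $B$) that the statement and proof in the paper tacitly require.
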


\section{Proofs}
\begin{lemma}\label{abov}
For $k >2$, consider the recurrence relation \begin{equation}\label{christi}
P_n(z)+ B(z)P_{n-1}(z)+ A(z)P_{n-k}\\
(z)=0 \end{equation} with the standard initial conditions, \begin{equation*} P_{0}(z)=1, P_{-1}(z)=\ldots=P_{-k+1}(z)=0,\end{equation*} where $A(z)$ and $B(z)$ are arbitrary  polynomials. If $f(z)= \frac{B^k(z)}{A(z)}$, then the zeros of $P_{1}(z)$ and $P_2(z)$ are critical points of $f(z)$.
\end{lemma}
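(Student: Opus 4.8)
The plan is to compute the first two polynomials $P_1(z)$ and $P_2(z)$ explicitly from the recurrence and initial data, and then to recognize their zeros inside the zero set of the Wronskian that appears in Remark~\ref{in2}. First I would evaluate \eqref{christi} at $n=1$ and $n=2$. The point where the hypothesis $k>2$ enters is that the shifted index $n-k$ then lands in the range $\{-1,-2,\dots,-k+1\}$ on which the initial data vanish: for $n=1$ one has $1-k=-(k-1)$, and for $n=2$ one has $2-k=-(k-2)$, both lying in $\{-1,\dots,-k+1\}$. Hence the term $A(z)P_{n-k}(z)$ drops out in each case, and using $P_0(z)=1$ this yields
\begin{equation*}
P_1(z)=-B(z)P_0(z)=-B(z),\qquad P_2(z)=-B(z)P_1(z)=B^2(z).
\end{equation*}
In particular, the zeros of both $P_1(z)$ and $P_2(z)$ coincide, as sets, with the zeros of $B(z)$.

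Next I would invoke the factorization recorded in Remark~\ref{in2}, namely
\begin{equation*}
f'(z)=\frac{\mathcal{W}(B^k,A)}{A^2}=\frac{B^{k-1}(z)\bigl(kA(z)B'(z)-B(z)A'(z)\bigr)}{A^2(z)}.
\end{equation*}
Because $k>2$ forces $k-1\ge 1$, the factor $B^{k-1}(z)$ vanishes at every zero of $B(z)$, so each zero of $B(z)$ is a zero of the Wronskian $\mathcal{W}(B^k,A)$ and therefore a critical point of $f$: when $A$ and $B$ are coprime such a point is not a pole, so $f'$ vanishes there, while a common zero of $A$ and $B$ would be a pole, which is a critical point by the definition adopted in the excerpt. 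Combining this with the first step, every zero of $P_1(z)$ and of $P_2(z)$ is a critical point of $f(z)$, which is exactly the claim.

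The computation is elementary, so I do not expect any serious obstacle; the only step requiring genuine care is the index bookkeeping that guarantees $P_{n-k}(z)=0$ for $n=1,2$. This is precisely where $k>2$ is used, since already for $k=2$ the term $P_{2-k}(z)=P_0(z)=1$ would fail to vanish and $P_2(z)$ would instead equal $B^2(z)-A(z)$, whose zeros need not be zeros of $B(z)$ at all. Keeping this boundary case in mind is the natural sanity check on the argument.
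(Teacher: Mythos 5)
Your proposal is correct and follows essentially the same route as the paper: compute $P_1(z)=-B(z)$ and $P_2(z)=B^2(z)$ from the recurrence (using $k>2$ to kill the $A(z)P_{n-k}(z)$ term), then apply Remark~\ref{in2} to see that the zeros of $B(z)$ are critical points of $f(z)$ via the factor $B^{k-1}(z)$ in the Wronskian. Your extra care about the index bookkeeping and the coprimality of $A$ and $B$ only makes explicit what the paper leaves implicit.
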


\begin{proof}
Substitution of the initial conditions  in the recurrence relation
$$P_n(z)+ B(z)P_{n-1}(z) +A(z) P_{n-k}(z)=0$$ for $n=1$ gives  $P_1(z)=-B(z).$  Similarly since $k>2$, repeating the same process gives $P_2(z)=B^2(z).$ The zeros of $P_{1}(z)$ and $P_2(z)$ are the zeros $B(z).$ The conclusion that the zeros of $P_{1}(z)$ and $P_2(z)$ are critical points of $f(z)$ follows from  Remark \ref{in2}.
\end{proof}
\begin{remark}
For the recurrence \eqref{christi} to generate a sequence of hyperbolic polynomials, $B(z)$ must be  hyperbolic by Lemma \ref{abov}.
 \end{remark}
Let us briefly discuss some facts about the limiting curve $\Gamma$. Let 
$P(\lambda, z)= \lambda^k + B(z)\lambda + A(z)$ be the characteristic polynomial of the recurrence \eqref{christ} and $\lambda_1(z), \lambda_2(z), \ldots, \lambda_k(z)$ be its distinct non-zero characteristic roots. For $i \neq j$, let 
$\Gamma_{i,j}:= \{ \alpha \in \mathbb{C}: |\lambda_i(\alpha)|= |\lambda_j(\alpha)|\}$ be
the  equimodular curve of $P(\lambda, z)$ associated to the characteristic functions  $\lambda_i(z)$ and $\lambda_j(z)$. For a fixed $\alpha \in \mathbb{C}$ with $i\neq j$, we have $|\lambda_i(\alpha)|= |\lambda_j(\alpha)|$
if and only if there exists an $s \in \mathbb{C}$ such that $|s| = 1$ and $\lambda_i(\alpha)=s\lambda_j(\alpha).$ 

For each $i\neq j$, let $w=w(z):=\lambda_i(z)/\lambda_j(z)$    and define
 $P_w(\lambda,z)= P(w\lambda,z)=w^k\lambda^k + B(z)w\lambda + A(z)$. Then it is clear that $\lambda_j(z)$ is a common solution of both $P(\lambda,z) = 0$ and $P_w(\lambda,z)= 0$. 
A  necessary and sufficient condition for $P_w(\lambda,z)$ and $P(\lambda,z)$ to have a non-constant common factor is that their resultant $\rho(w, z)$ vanishes as a function of $z$.
By Lemma 3 \cite[\textsection  3]{Biggs},  $\rho(w, z)=A(z)(w-1)^k\Delta_k (w, z)$ where $\Delta_k (w, z)$ is a reciprocal polynomial in $w$ of degree  $ k(k - 1)$. In addition, $\Delta_k (1, z)$ is a multiple of Disc$_{\lambda}(P(\lambda, z))$, the discriminant of $P(\lambda, z)$.
In the same paper, it is proved that the reciprocal polynomial $\Delta_k (w, z)$ can be written as
 \begin{eqnarray*}\label{ecclipse}
\Delta_k (w,z)=w^{k(k-1)/2}v(t,z) \end{eqnarray*}
 where  $t = w + w^{-1} + 2$.
 The equimodularity condition $|w| = 1$ corresponds to  $t$ being in the real interval $0 \leq t \leq 4$. In particular, \begin{eqnarray}\label{kall}
 v(4,z)=\Delta_k (1,z)= \pm \operatorname{Disc}_{\lambda}(P(\lambda, z)). \end{eqnarray}

 \begin{lemma} \label{Namak}
 Let $A(z)$ and $B(z)$ be as defined in \eqref{christ} and  $R(t, z)= t^k + B(z)t + A(z)$. Further, let $U:=\{z: \operatorname{Disc}_t(R(t,z))= 0 \}$ and $\Gamma$ be the curve defined in \eqref{eq:trcurve}. A point  $z_0\in \mathbb{C}$ is an endpoint of $\Gamma$  if and only if $ z_0 \in U$  and $A(z_0)\neq 0$. 
\end{lemma}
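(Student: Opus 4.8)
The plan is to translate the two defining conditions of $\Gamma$ into the language of the rational map $f(z)=\frac{B^k(z)}{A(z)}$ and then read off the endpoints from the local structure of $\widetilde{\Gamma}=f^{-1}(\mathbb{R}P^1)$. First I would record the arithmetic identity coming from Example~\ref{exxa}: since $R(t,z)=t^k+B(z)t+A(z)$ has exactly the shape of the trinomial in \eqref{eqq},
\begin{equation*}
\operatorname{Disc}_t(R(t,z))=A(z)^{k-2}\Bigl(k^kA(z)+(-1)^{k-1}(k-1)^{k-1}B(z)^k\Bigr).
\end{equation*}
Hence, for a point $z_0$ with $A(z_0)\neq0$, the factor $A(z_0)^{k-2}$ is nonzero (as $k>2$), so $z_0\in U$ is equivalent to the vanishing of the second factor, i.e. to $(-1)^k\frac{B^k(z_0)}{A(z_0)}=\frac{k^k}{(k-1)^{k-1}}=:M$. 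In particular such a $z_0$ satisfies $B(z_0)\neq0$, the value $f(z_0)=(-1)^kM$ is real so $z_0\in\widetilde{\Gamma}$, and $z_0$ lies exactly on the upper boundary $(-1)^k\Re f=M$ of the interval in \eqref{eq:trcurve}. This reduces the lemma to the geometric claim that the endpoints of $\Gamma$ are precisely the points where $(-1)^k\Re f$ attains the value $M$.

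On $\widetilde{\Gamma}$ the function $h:=(-1)^kf$ is real-valued, and $\Gamma=\{z\in\widetilde{\Gamma}:0\le h(z)\le M\}$. The idea is to analyse $\Gamma$ locally near each of its boundary points, i.e. the points with $h\in\{0,M\}$, using the fact that at a regular point of $f$ the set $\widetilde{\Gamma}$ is a single smooth arc along which $f$ (hence $h$) is strictly monotone, while at a zero of $f'$ of order $p$ it consists of $p+1$ smooth arcs crossing transversally. For the sufficiency direction, at $z_0\in U$ with $A(z_0)\neq0$ we have $h(z_0)=M$; if $f'(z_0)\neq0$ then $\widetilde{\Gamma}$ is one arc through $z_0$, one side of which has $h>M$ (outside $\Gamma$) and the other $h<M$ (inside $\Gamma$, and still $h>0$ by continuity since $M>0$), so $\Gamma$ is a single half-open arc ending at $z_0$, an endpoint.

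For the necessity direction, an endpoint of $\Gamma$ must be a boundary point of the constraint, so $h(z_0)\in\{0,M\}$ (the curve $\widetilde{\Gamma}=f^{-1}(\mathbb{R}P^1)$ itself has no endpoints, being the preimage of the closed circle $\mathbb{R}P^1$). I would then rule out $h(z_0)=0$: there $f(z_0)=0$ forces $B(z_0)=0$, and by Remark~\ref{in2} a zero of $B$ of multiplicity $m$ gives $f(z)=f(z)-f(z_0)\sim c(z-z_0)^{mk}$, so $\widetilde{\Gamma}$ has $mk$ branches through $z_0$ and $\Gamma$ (the half on which $h\ge0$) has $mk\ge k>2$ arcs emanating from $z_0$; such a branch point is not an endpoint. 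Consequently every endpoint has $h(z_0)=M$, which by the first step means $z_0\in U$ and $A(z_0)\neq0$.

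The main obstacle is the behaviour at critical points of $f$ sitting on the upper boundary, namely the possibility that a point $z_0\in U$ with $A(z_0)\neq0$ is itself a zero of $f'$; then $\widetilde{\Gamma}$ branches and $\Gamma$ acquires several arcs at $z_0$, which would spoil the single terminating arc picture. To control this I would differentiate the second discriminant factor $G(z):=k^kA+(-1)^{k-1}(k-1)^{k-1}B^k$ and compare it with $f'=\frac{B^{k-1}(kAB'-BA')}{A^2}$; using $G(z_0)=0$ to eliminate $(-1)^{k-1}(k-1)^{k-1}$, one finds that $G'(z_0)$ and $f'(z_0)$ are both nonzero multiples of $\bigl(A'B-kAB'\bigr)(z_0)$, so $f'(z_0)\neq0$ exactly when $z_0$ is a simple zero of $\operatorname{Disc}_t(R)$. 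Thus the clean endpoint picture holds at every simple discriminant point, and the degenerate case of a multiple discriminant zero (where $f'(z_0)=0$) is the delicate situation that the argument must either exclude by genericity of the pair $(A,B)$ or treat separately.
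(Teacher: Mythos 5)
Your route is genuinely different from the paper's. The paper does not analyse $f$ at all: it sets $E=U\setminus\{A=0\}$, invokes the identity $v(4,z)=\Delta_k(1,z)=\pm\operatorname{Disc}_\lambda(P(\lambda,z))$ coming from Biggs's resultant factorization $\rho(w,z)=A(z)(w-1)^k\Delta_k(w,z)$, and then cites Biggs (\textsection 5 of \cite{Biggs}) for the statement that $v(4,z_0)=0$ together with $A(z_0)\neq0$ characterizes the endpoints of the segments of the equimodular curve $\Gamma$. Your first step --- using \eqref{eqq} to show that for $A(z_0)\neq0$ the condition $z_0\in U$ is equivalent to $(-1)^k f(z_0)=\frac{k^k}{(k-1)^{k-1}}$, i.e.\ to $z_0$ sitting on the upper boundary of the interval in \eqref{eq:trcurve} --- is correct and is a nice explicit substitute for the $v(4,z)$ identity. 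Your local analysis of $f^{-1}(\mathbb{R}P^1)$ is also the right picture, and your exclusion of the lower boundary $h=0$ (where $B(z_0)=0$ forces a zero of $f$ of order $mk\ge 3$, hence a branch point rather than a terminus) is sound.

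However, there is a genuine gap, and you have named it yourself without closing it: the case where $z_0\in U$, $A(z_0)\neq0$, but $z_0$ is a multiple zero of $\operatorname{Disc}_t(R(t,z))$, equivalently $f'(z_0)=0$. You suggest excluding this ``by genericity of the pair $(A,B)$,'' but the lemma is stated for \emph{arbitrary} coprime real polynomials, and the degenerate configuration really occurs: for $k=3$ the second discriminant factor is $27A+4B^3$, and taking $B(z)=z$, $A(z)=\bigl((z-1)^2-4z^3\bigr)/27$ gives coprime polynomials with $27A+4B^3=(z-1)^2$, a double zero at $z_0=1$ where $A(1)\neq0$. At such a point $\widetilde{\Gamma}$ has several arcs crossing and $\Gamma$ acquires two or more rays emanating from $z_0$, so whether $z_0$ counts as an ``endpoint'' depends on how one decomposes $\Gamma$ into segments --- a definitional issue the paper sidesteps entirely by importing Biggs's notion of segment and his endpoint criterion wholesale. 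To make your argument complete you must either treat this branching case and argue that $z_0$ is still an endpoint of each of the segments meeting there (which is in fact how Biggs's statement should be read), or state the lemma with the convention of \cite{Biggs} made explicit. As written, the sufficiency direction of your proof only covers simple discriminant zeros.
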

\begin{proof}
Let $E= U \setminus\{z\in \mathbb{C}: A(z)= 0 \}$. We observe that $z_0 \in E$ if and only if
$\operatorname{Disc}_t(R(t,z_0))=0$ and $A(z_0)\neq 0$. This is equivalent to $v(4,z_0)=0$ and $A(z_0)\neq 0$ by \eqref{kall}. But by the results proved  in {\cite[\textsection  5] {Biggs}}, $v(4,z_0)=0$ and $A(z_0)\neq 0$ if and only if
 $z_0$ is an endpoint of segments of $\Gamma.$  The proof is complete.
 \end{proof}
 
\begin{remark}
For the recurrence \eqref{christi} to generate a sequence of hyperbolic polynomials, it is a  necessary condition that  the $z_0 \in \mathbb{C}$ mentioned in Lemma \ref{Namak} is real.
\end{remark}
We now state the following theorem about the behaviour of analytic functions near a critical point. This will be used in the proof of the main result. For details of the proof, see \cite{SBB}.

For $\delta >0$ and $z_0 \in \mathbb{C}$, we define
$D_\delta(z_0 ) := \{z \in \mathbb{C}:  |z − z_0 | < \delta\}$.
\begin{theorem} \label{TTH}
 Let $g(z)$ be  a non-constant analytic, function in a region, $\Omega \subset \mathbb{C}$. Let $z_0 \in \Omega$, 
$ w_0 = g(z_0)$, and suppose that $g(z)- w_0$ has a zero of order $p \geq 2$ at $z_0$. 
The following hold; 
\begin{enumerate}[(a)]\item
 There are $\epsilon, \delta > 0$ such that for every $w \in D_\epsilon(w_0) \setminus 
 \{w_0 \}$, there are exactly $p$ distinct solutions of 
 \begin{eqnarray} \label{innocencia}
 g(z) = w
 \end{eqnarray}
 with $z \in D_{\delta}(z_0)$. Moreover, for these solutions, $g(z)- w$ has a simple
zero.
\item There is an analytic function, $h$, on $D_{\epsilon^{1/p}}(0)$ with $h(0) = 0, h'(0)\neq 0$, so that if $w \in D_{\epsilon}(w_0)$ and
\begin{eqnarray*}
w = w_0 + \tau  e^{i\theta},~~~~~~ 0 < \tau < \epsilon, 0 \leq \theta < 2\pi 
\end{eqnarray*}
then the $p$ solutions of (\ref{innocencia}) are given by
\begin{eqnarray*}
z = z_0 + h(\tau^{1/p} e^{( i(\theta +2\pi j)/p )}), ~~~~j = 0,1,\dots, p-1. 
\end{eqnarray*}

\item There is a power series, $\sum_{n=1}^{\infty}b_n x^n$, with radius of convergence at least $\epsilon$, so the solutions of (\ref{innocencia}) are given by
$$z = z_0 + \sum_{n=1}^{\infty}b_n(w-w_0)^{n/p}$$
where $(w-w_0 )^{1/p}$ is interpreted as the pth roots of $(w-w_0 )$ (same root
taken in all terms of the power series).
\end{enumerate}
\end{theorem}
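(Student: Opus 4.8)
This is the classical local normal form for a non-constant analytic map at a point where its derivative vanishes to order $p-1$, and the plan is to establish the three assertions in sequence, each feeding into the next. For part (a) the plan is to combine the fact that zeros of analytic functions are isolated with Rouch\'e's theorem. Since $g$ is non-constant, both $g-w_0$ (with a zero of order exactly $p$ at $z_0$) and $g'$ (with a zero of order $p-1$ at $z_0$) have isolated zeros, so I would first fix $\delta>0$ small enough that $\overline{D_\delta(z_0)}\subset\Omega$, that $z_0$ is the only zero of $g-w_0$ in $\overline{D_\delta(z_0)}$, and that $g'\neq0$ on $D_\delta(z_0)\setminus\{z_0\}$. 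Setting $m=\min_{|z-z_0|=\delta}|g(z)-w_0|>0$ and $\epsilon=m$, for any $w\in D_\epsilon(w_0)$ and $|z-z_0|=\delta$ one has $|w-w_0|<\epsilon\le|g(z)-w_0|$, so Rouch\'e's theorem gives that $g(z)-w=(g(z)-w_0)-(w-w_0)$ has the same number of zeros in $D_\delta(z_0)$ as $g(z)-w_0$, namely $p$ counted with multiplicity. For $w\neq w_0$ these $p$ zeros avoid $z_0$, hence lie in the region where $g'\neq0$ and are simple, giving exactly $p$ distinct simple solutions.

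For part (b) the idea is to straighten $g$ into its normal form $\zeta\mapsto\zeta^p$. I would write $g(z)-w_0=(z-z_0)^p\phi(z)$ with $\phi$ analytic near $z_0$ and $\phi(z_0)\neq0$; since $\phi$ is then nonvanishing on a neighbourhood of $z_0$, a single-valued analytic branch of $\phi(z)^{1/p}$ exists there, and I would set $\zeta(z)=(z-z_0)\,\phi(z)^{1/p}$, so that $g(z)-w_0=\zeta(z)^p$ identically. Because $\zeta(z_0)=0$ and $\zeta'(z_0)=\phi(z_0)^{1/p}\neq0$, the inverse function theorem supplies an analytic inverse $z=z_0+h(\zeta)$ with $h(0)=0$ and $h'(0)\neq0$; after shrinking $\epsilon,\delta$ if necessary I would arrange that $\zeta$ is a biholomorphism of a neighbourhood of $z_0$ onto $D_{\epsilon^{1/p}}(0)$, which is exactly the domain of $h$. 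In these coordinates $g(z)=w$ becomes $\zeta^p=w-w_0$, and writing $w-w_0=\tau e^{i\theta}$ its roots are $\zeta=\tau^{1/p}e^{i(\theta+2\pi j)/p}$ for $j=0,\dots,p-1$; applying $h$ yields the stated formula for the $p$ preimages, and by part (a) these are precisely all of them.

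For part (c), since $h$ is analytic on $D_{\epsilon^{1/p}}(0)$ with $h(0)=0$, I would simply take its Taylor expansion $h(\zeta)=\sum_{n=1}^{\infty}b_n\zeta^n$, whose radius of convergence is at least $\epsilon^{1/p}$ and hence, after arranging $\epsilon\le1$, at least $\epsilon$. Substituting $\zeta=(w-w_0)^{1/p}$ with the same branch of the root in every term converts this into $z=z_0+\sum_{n=1}^{\infty}b_n(w-w_0)^{n/p}$, valid for $w\in D_\epsilon(w_0)$, which is the Puiseux expansion claimed.

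The step I expect to be the genuine obstacle, as opposed to routine bookkeeping, is the construction in (b): producing a single-valued analytic $p$-th root $\phi^{1/p}$ near $z_0$ and verifying that $\zeta(z)$ is an honest local biholomorphism, since this is what upgrades the purely topological $p$-to-$1$ statement of (a) to the analytic parametrisation of (b) and (c). The remaining difficulty is merely the coordination of the constants $\epsilon$ and $\delta$ so that the choices made in the three parts are mutually compatible.
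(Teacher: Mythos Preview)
The paper does not actually prove this theorem: it is quoted from Simon's \emph{Basic Complex Analysis} and the reader is referred there for the proof. Your argument is correct and is precisely the standard proof one finds in such references---Rouch\'e for the counting statement in (a), extraction of an analytic $p$-th root of the nonvanishing factor $\phi$ to build the local coordinate $\zeta$ in (b), and Taylor expansion of the inverse $h$ for the Puiseux series in (c). Your identification of the construction of $\zeta$ as the key step is accurate; the only point requiring a moment's care, which you handle, is that the radius of convergence of $\sum b_n x^n$ is a priori $\epsilon^{1/p}$ rather than $\epsilon$, so one must shrink to $\epsilon\le 1$ to match the statement as written.
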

Let us finally settle the main result of this paper.

\begin{proof}[Proof of Theorem 2]
Suppose that $B(z)$ is hyperbolic, otherwise the theorem follows from Lemma \ref{abov}.
Additionally, let $z_0$ be a zero of $B(z)$ with multiplicity $p>0$. Then $z_0$ is a zero of both $P_1(z)$ and $P_2(z)$ by Lemma \ref{abov}. By Remark \ref{in2}, $z_0$ is a real critical point of $f(z)$. Moreover, $z_0 \in {\Gamma}$ by \eqref{eq:trcurve}. 
Theorem \ref{TTH} implies that in the neighbourhood of $z_0$,
 \begin{eqnarray*}
 f(z)= \frac{B^k(z)}{A(z)}=(z-z_0)^{pk}q(z)	
 \end{eqnarray*} 
 where $q(z)$ is analytic at $z_0$ and  $q(z_0)\neq 0$.
Pick a $\delta_1 >0$ such that  $q(z)$ is non-vanishing in $D_{\delta_1}(z_0 )$. 
In this neighbourhood, there exists an analytic function  $q_1(z)$ such that $q_1(z)=\sqrt[pk]{q(z)}$. Take $q_1(z)$ as a branch of $\sqrt[pk]{q(z)}$.  Define
$u(z):=(z-z_0)q_1(z)$. Then we have
\begin{eqnarray*}
 f(z)=u(z)^{pk},~~~~\mbox{where}~~~~ u(z_0)=0,~~ q_1(z_0)=u'(z_0)\neq 0.
\end{eqnarray*}
Thus for a small positive $\epsilon \in \mathbb{R}$, the equation 
\begin{eqnarray*}
 f(z)=\pm \varepsilon
\end{eqnarray*}
is equivalent to 
\begin{eqnarray}\label{cbcbi}
(z-z_0)^{pk} q_1(z)^{pk}=\pm \varepsilon. \end{eqnarray}
 Let $h(z)$ be the inverse function to $u(z)$. Then by
applying Theorem \ref{TTH} to \eqref{cbcbi} where the left side of this equation has a zero $z=z_0$ with multiplicity $pk$, we obtain solutions of the form  
\begin{eqnarray*}
z=z_0 + h(\varepsilon^{(1/pk)} e^{(2 \pi i j/ pk)}) \end{eqnarray*}
or
\begin{eqnarray*}
z=z_0 + h(\varepsilon^{(1/pk)} e^{(i\pi + 2 \pi i j)/ pk)})\end{eqnarray*}
where  $\varepsilon^{(1/pk)}$ is the $pk$-th roots of $\varepsilon.$
Using the fact that $pk >2$ and $h$ has a simple zero at $0$, we deduce that \eqref{cbcbi}  has $pk$ solutions $z$ and these cannot be all real.

Denote by $\rho= \frac{k^k}{(k-1)^{k-1}}$. Then, by Theorem 2 [7],
the zeros of $P_n(z)$ are contained in $\Gamma = f^{-1}([0, \rho] )$ or 
$\Gamma = f^{-1}([-\rho, 0] )$ when $k$ is even or odd respectively, and these zeros are dense on $\Gamma$ as $n \to \infty.$ Now for $[0, \varepsilon]\subset [0, \rho]$ and $ [-\varepsilon, 0]\subset [-\rho, 0],$ it follows that  $f^{-1}([0, \varepsilon]) \subset \Gamma$  or $f^{-1}([ -\varepsilon, 0]) \subset \Gamma$. For all the polynomials $P_n(z)$ to be hyperbolic, we require $\Gamma$ to consist only of intervals on the real line in $\mathbb{C}$. From the  solutions of \eqref{cbcbi}, it is clear that neither $f^{-1}([0, \varepsilon])$ nor $f^{-1}([ -\varepsilon, 0])$  is a subset of only real intervals. Thus there will always be at least one non-real curve through $z_0$ on which non-real zeros of $P_n(z)$ will be located. The conclusion follows.
\end{proof}

\section{Examples}\label{BIII}
In this section we present concrete examples using numerical experiments. In these examples, we consider the sequence of polynomials $\{P_n(z)\}_{n=0}^{\infty}$ generated by the rational function 
\begin{equation}\label{uuuu}
 \sum_{n=0}^{\infty}P_n(z)t^n= \frac{1}{1+B(z)t+ A(z)t^k},
 \end{equation}
where $A(z)$ and $B(z)$ are coprime real polynomials. We plot a graph showing
\begin{enumerate}[(i)]
\item a portion (black curves) of the curve $\widetilde{\Gamma}$  given by $\Im \left(\frac{B^k(z)}{A(z)}\right)=0$; 
\item the zeros (circles) of one of the polynomials  $P_n(z)$  in \eqref{uuuu} (of our choice) described by specifying  $k, A(z), B(z)$ and a positive integer $n$. These are located on $\Gamma$;
\item the points $z^*\in \mathbb{C}$ which are endpoints of the curve $\Gamma$ (indicated by black dots). Such points are the elements of the set $E$ defined in Lemma \ref{Namak}.  
\end{enumerate}
 
\begin{example} For $n=71,k=3, A(z)=z^3 - z^2 - 5 z + 7$ and  $B(z)=z^2 - z -6$, we obtain Fig. \ref{a1}.
\begin{figure}[h]\begin{center}
$ 
\begin{array}{c}
\includegraphics[height=8cm, width=12cm]{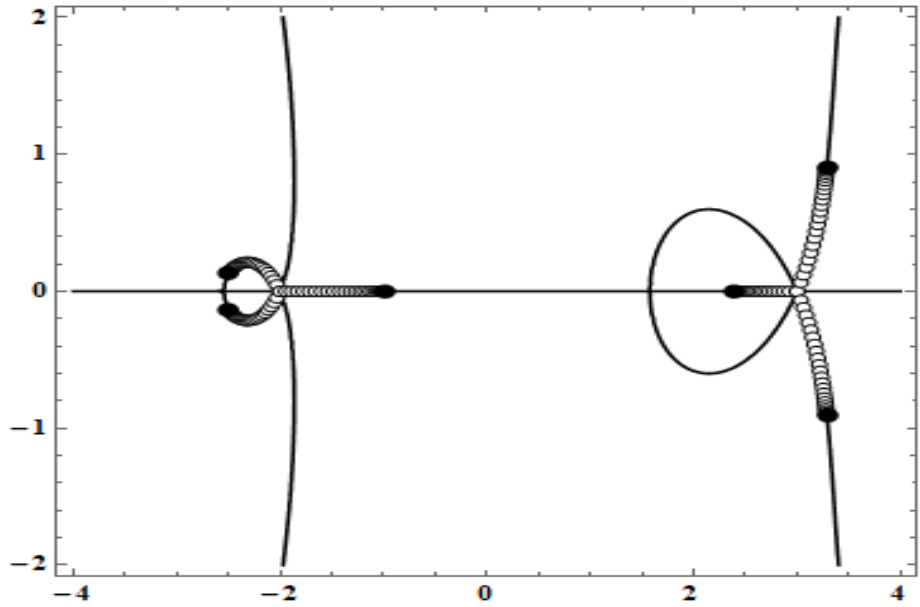}
\end{array}
$
\end{center}
\caption{The rational function is given by $f(z)=1/(1+(z^2 - z-6)t+(z^3 -z^2-5z+7 )t^3)$.}\label{a1}
\end{figure} 
\end{example}

\begin{example} For $n=150, k=5, A(z)=z^2+z-4$ and  $B(z)=z^2+z-2$, we obtain Fig. \ref{a15}.  
\begin{figure}[h]\begin{center}
$ 
\begin{array}{c}
\includegraphics[height=8cm, width=12.3cm]{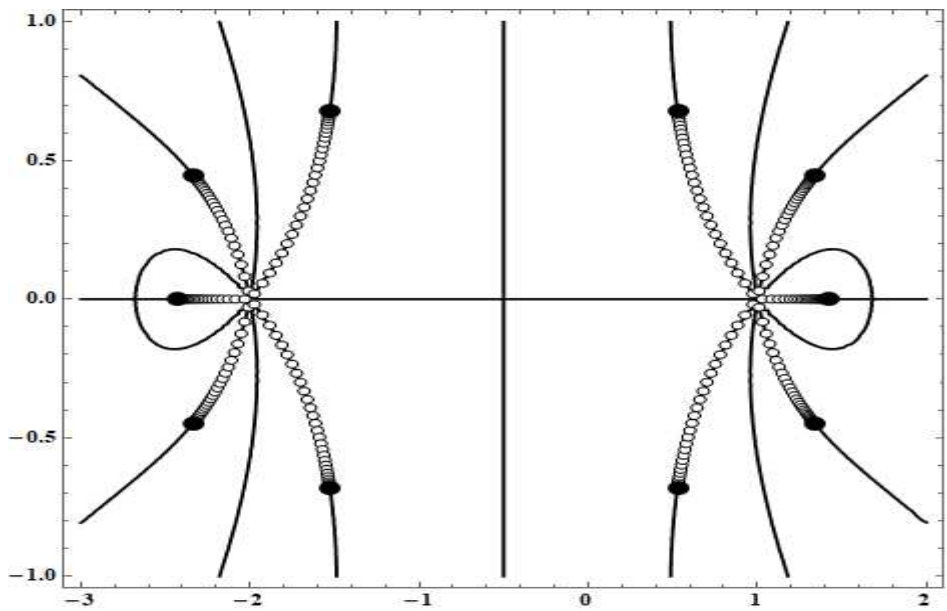}
\end{array}
$
\end{center}
\caption{The rational function is given by $f(z)=1/(1+(z^2+z-2)t+(z^2 + z -4 )t^5)$.}\label{a15}

\end{figure} 
\end{example}
\section{Final Remarks}
Problem 1 has been settled in the negative in the sense that it is not possible to generate a sequence of hyperbolic polynomials using the recurrence  (\ref{christ})  with the given initial conditions \eqref{kachap}.

\medskip

{\bf Acknowledgements.}  I am indebted to my advisor Professor Boris Shapiro for the discussions and comments. I am thankful  Dr. Alex Samuel Bamunoba, the fruitful discussions and guidance. Thanks go to the anonymous referee for careful reading this work, and giving  insightful comments and suggestions. I also acknowledge the financial support provided by Sida Phase-IV bilateral program with Makerere University.

\end{document}